\definecolor{dblue}{rgb}{0.,0.,0.8}
\definecolor{dgreen}{rgb}{0.,0.6,0.}
\renewcommand{\dim}{d}
\newcommand{\tends}{\rightarrow}
\newcommand{\Hd}{\mathbb{H}^\dim}
\newcommand{\abs}[1]{\left|#1\right|}
\newcommand{\tst}{\textstyle}                                      
\newcommand{\m}[1]{\mathcal{#1}}                                   
\newcommand{\eps}{\varepsilon}                                     
\newcommand{\R}{\mathbb R}                                         
\newcommand{\p}{\partial}                                          
\renewcommand{\theta}{{\vartheta}}                                 
\newcommand{\Om}{\Omega}                                      
\newcommand{\oO}{{\overline{\Om}}}                                  
\newcommand{\pO}{{\p \Om}}                                          
\newcommand{\oh}{\frac{1}{2}}                                      
\let\inf\relax \DeclareMathOperator*\inf{\vphantom{p}inf}
\let\sup\relax \DeclareMathOperator*\sup{\vphantom{p}sup}
\let\liminf\relax \DeclareMathOperator*\liminf{\vphantom{p}liminf}
\let\limsup\relax \DeclareMathOperator*\limsup{\vphantom{p}limsup}
\newcommand{\USC}{\mathrm{USC}}
\newcommand{\LSC}{\mathrm{LSC}}
\title{On the notion of boundary conditions in comparison principles for viscosity solutions}
\abstract{We collect examples of boundary-value problems of Dirichlet and Dirichlet--Neu\-mann type which we found instructive when designing and analysing numerical methods for fully nonlinear elliptic partial differential equations. In particular, our model problem is the Monge--Amp\`ere equation, which is treated through its equivalent reformulation as a Hamilton--Jacobi--Bellman equation. Our examples illustrate how the different notions of boundary conditions appearing in the literature may admit different sets of viscosity sub- and supersolutions. We then discuss how these examples relate to the validity of comparison principles for these different notions of boundary conditions.}
\keywords{Viscosity boundary conditions, comparison principles, Hamilton--Jacobi--Bellman equations, Monge-Amp\`ere equations, Barles--Souganidis theorem}
\begin{document}


\section{Introduction}

In this short note we collect a small number of examples which we found instructive when designing and analysing numerical methods for fully nonlinear elliptic partial differential equations (PDE). In particular we are interested in the comparison principle between sub- and supersolutions, as used in the convergence proof by Barles and Souganidis~\cite{BS} for the approximation of viscosity solutions by monotone numerical schemes.

Our model problem is the following simple Monge-Amp\`ere equation
\begin{align} \label{MA}
M (D^2 u) = 0, \qquad M (A) := {\tst \oh} f^2 - \det A
\end{align}
on a domain $\Omega \subset \R^\dim$, $\dim\geq 2$ with $f \ge 0$. The problem is complemented with either Dirichlet or mixed Dirichlet--Neumann boundary conditions, as well as the requirement that $u$ be a convex function. In order to conform to the standard framework of degenerate elliptic operators, we consider the following reformulation of \eqref{MA} as a Hamilton--Jacobi--Bellman (HJB) equation~\cite{Krylov87,FJ16}
\begin{align} \label{HJB}
H (D^2 u) = 0, \qquad H (A) := \sup_{B \in \m S_1} (- B : A + f \, \sqrt{\det B}),
\end{align}
where $\m S_1$ is the set of symmetric positive semidefinite matrices in $ \R^{d \times d}$ with trace~equal to~$1$. In particular, it was shown in~\cite{FJ16} that \eqref{MA} (including the convexity constraint) is equivalent to \eqref{HJB} in the sense of viscosity solutions.

\begin{remark}
Observe that the Barles--Souganidis theorem cannot be considered directly for~\eqref{MA} because~\eqref{MA} is only elliptic on the set of convex functions and its test functions are usually assumed to be convex \cite[Definition 1.3.1]{G01}. This is the reason why we shall work with the equivalent formulation ~\eqref{HJB}.
\end{remark}

Comparison principles are central to the theory of viscosity solutions, both for the analysis of well-posedness of the PDE and for the analysis of numerical methods. While conceptually the statement of a comparison principle requires that subsolutions lie below supersolutions, the different formulations of the boundary conditions and the different sets of available test functions raise the question of the validity of the corresponding comparison principle. For instance, the boundary conditions can be imposed in the following variety of ways:
\begin{enumerate}
\item In the classical sense, where the Dirichlet boundary condition is understood point\-wise everywhere on the boundary; this is the setting for the comparison principle of Theorem~3.3 in the User's Guide \cite{UG} by Crandall,~Ishii and Lions.
\item As in the setting of the Barles--Souganidis theorem \cite{BS}, where the Dirichlet boundary condition is relaxed from its classical pointwise sense, and is understood in a generalised sense that allows extensions of the PDE onto the boundary. This notion of the boundary conditions is the subject of section~\ref{sec:BS} below. We remark that in the Barles--Souganidis theorem~\cite{BS}, the comparison principle required for the analysis was stated as an assumption.
\item As in Definition 7.4 of the User's Guide \cite{UG}, where boundary conditions are relaxed similarly to the Barles--Souganidis approach, but semi-continuity of sub- and supersolutions is assumed from the outset and a closure operation is applied to the second-order jets. See also \cite{BP}, where the semi-continuity for sub- and supersolutions of Hamilton-Jacobi equations is imposed, but the closure of the jets is not introduced.
\end{enumerate}
\noindent We also refer the reader to \cite[Definition~7.1]{UG} on the intermediate notion of the boundary condition named therein as the strong viscosity sense.
The sets of sub- and supersolutions are usually chosen within
\begin{enumerate}
\item the spaces $\USC(\oO)$ of bounded upper semi-continuous functions and $\LSC(\oO)$ of bounded lower semi-continuous functions,
\item or within the function space $C(\oO)$ of continuous functions,
\item or, in the classical setting, within the function space $C(\oO)\cap C^2(\Om)$ of twice continuously differentiable functions.
\end{enumerate}
Here, we shall focus our attention on the semi-continuous case because this is the relevant one for the analysis of numerical methods, where only the semi-continuity of upper and lower envelopes of sequences of numerical solutions is known a priori. Nevertheless, it is worth observing that the existence of a comparison principle may well be conditional to further regularity or structure assumptions on the set of sub- and supersolutions. We point to Section 7.C of \cite{UG} for a general discussion of the subject. In this note we focus on the question of whether or not a comparison principle is available for the different notions of the boundary condition, without additional restrictions on the set of sub- and supersolutions. We take as a reference problem the simple Monge-Amp\`ere equation and illustrate with examples how the different types of Dirichlet conditions impose a constraint on sub- and supersolutions. In turn this also informs us how a numerical convergence analysis may be approached.

While we consider in the subsequent text different notions of viscosity sub- and supersolutions, a function $u$ is always said to be a viscosity solution if it is simultaneously a viscosity subsolution and supersolution.

Given a function $v$ we denote its upper semi-continuous envelope by $v^*$ and its lower semi-continuous envelopes by $v_*$, respectively. More precisely, for all $x\in \oO$,
\begin{equation*} 
\begin{aligned}
v^*(x) \coloneqq \sup_{\substack{\{y_n\}_n \subset \oO\\ y_n \tends x}}\limsup_{n\tends\infty} v(y_n), & &&  v_*(x) \coloneqq \inf_{\substack{\{y_n\}_n \subset  \oO\\ y_n \tends x}}\liminf_{n\tends\infty} v(y_n).
\end{aligned}
\end{equation*}

\section{Dirichlet boundary conditions as in the Barles--Souganidis theorem}\label{sec:BS}

Let $\Om$ be a open subset of $\R^\dim$ and consider the model problem~\eqref{HJB} with a homogeneous Dirichlet boundary condition $u=0$ on $\pO$. In line with Definition 1.1 and equations (1.8), (1.9) of~\cite{BS}, we say that a locally bounded function $v$ is a viscosity subsolution of the boundary value problem if
\begin{equation*}
F_*(D^2 \phi(x), v^*(x), x) \leq 0
\end{equation*}
for all $\phi \in C^2(\oO)$ such that $v^* - \phi$ has a local maximum at $x\in \oO$, where $F_*$ denotes the lower semicontinuous envelope of $F$ defined by
\[
F_*(A, w, x) = 
\begin{cases}
H(A) \quad & : x \in \Omega,\\\min \{ H(A), w \} & : x \in \pO.
\end{cases}
\]
Analogously, $v$ is a viscosity supersolution whenever 
\begin{equation}\label{supersol}
F^*(D^2 \phi(x), v_*(x), x) \ge 0
\end{equation}
for all $\phi \in C^2(\oO)$ such that $v_* - \phi$ has a local minimum at $x\in \oO$, where $F^*$ is the upper semicontinuous envelope of $F$ given by
\[
F^*(A, w, x) = 
\begin{cases}
H(A)  \quad & : x \in \Omega,\\
\max \{ H (A), w \} & : x \in \pO.
\end{cases}
\]
We consider in the following example the Monge--Amp\`ere equation on possibly one of the simplest domains with a boundary, namely a $\dim$-dimensional half-space. In particular, let $\Om = \Hd$, with $\dim\geq 2$, where $\Hd = \{x=(x_1,\dots,x_{\dim} ) \in \R^{\dim},\; x_1>0\}$, and consider the problem~\eqref{HJB} with vanishing source term $f=0$, corresponding to the degenerate elliptic case, complemented with homogeneous Dirichlet boundary conditions on $\pO=\{ x=(x_1,\dots,x_{\dim})\in \R^\dim,\; x_1=0\}$. It is clear that the function $u\equiv 0$ is a viscosity solution of the problem in the sense of \cite{BS}. However, we show below that uniqueness of the viscosity solution fails in this example.

\begin{proposition}\label{example1}
Let $\dim \geq 2$ and let $\Om=\Hd$ as above. For a fixed but arbitrary constant $c>0$, let the locally bounded function $v_c$ be defined by $v_c(x)=0$ if $x\in \Om$ and $v_c(x)=-c$ if $x\in \pO$.
Then $v_c$ is a viscosity solution of \eqref{HJB} in the sense of~{\upshape\cite{BS}}.
\end{proposition}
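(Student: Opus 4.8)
The plan is to verify directly that $v_c$ satisfies both the subsolution and the supersolution conditions from Definition 1.1 of \cite{BS}. The first step is to compute the relevant semicontinuous envelopes of $v_c$. Since $v_c\equiv 0$ on the open set $\Om$ and equals $-c$ on $\pO$, approaching any boundary point $x\in\pO$ through interior points gives the value $0$, so $v_c^*\equiv 0$ on $\oO$, while $v_c\equiv 0$ and $-c$ respectively gives $(v_c)_*(x)=0$ for $x\in\Om$ and $(v_c)_*(x)=-c$ for $x\in\pO$. These computations are immediate from the definitions of the envelopes recalled in the excerpt.

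For the subsolution property I would work with $v_c^*\equiv 0$. Suppose $\phi\in C^2(\oO)$ and $v_c^*-\phi$ has a local maximum at some $x\in\oO$; since $v_c^*$ is constant, this forces $\phi$ to have a local minimum at $x$, hence $D^2\phi(x)$ is positive semidefinite. The key point is that for positive semidefinite $A$ one has $H(A)\le 0$: indeed, with $f=0$ the supremum defining $H$ reduces to $\sup_{B\in\m S_1}(-B:A)$, and since both $B$ and $A$ are positive semidefinite we have $B:A\ge 0$, so $H(A)\le 0$. For interior points $x\in\Om$ this gives $F_*(D^2\phi(x),0,x)=H(D^2\phi(x))\le 0$ directly; for boundary points $x\in\pO$ we have $F_*=\min\{H(D^2\phi(x)),v_c^*(x)\}=\min\{H(D^2\phi(x)),0\}\le 0$. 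Thus the subsolution inequality holds in both cases.

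For the supersolution property I would use $(v_c)_*$ as computed above. If $\phi\in C^2(\oO)$ and $(v_c)_*-\phi$ has a local minimum at $x\in\oO$, there are two cases. At an interior point $x\in\Om$, locally $(v_c)_*\equiv 0$, so $\phi$ has a local maximum at $x$, forcing $D^2\phi(x)$ to be negative semidefinite; then $H(D^2\phi(x))=\sup_{B\in\m S_1}(-B:D^2\phi(x))\ge 0$ since $-B:D^2\phi(x)\ge 0$, giving $F^*=H(D^2\phi(x))\ge 0$. At a boundary point $x\in\pO$, we have $(v_c)_*(x)=-c<0$, and the definition gives $F^*(D^2\phi(x),-c,x)=\max\{H(D^2\phi(x)),-c\}$; I claim this is automatically $\ge 0$. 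The reason is that $H(A)\ge 0$ for \emph{every} symmetric $A$ when $f=0$: choosing $B$ to be the rank-one projection onto a unit eigenvector of $A$ associated with its smallest eigenvalue $\lambda_{\min}(A)$ yields $-B:A=-\lambda_{\min}(A)$, and taking the eigenvector of the largest eigenvalue if needed, the supremum $\sup_{B\in\m S_1}(-B:A)=-\lambda_{\min}(A)\ge\ldots$; more carefully, $\sup_{B}(-B:A)=-\lambda_{\min}(A)$, which can be negative, so one must instead note that at an interior maximum of $\phi$ the Hessian is negative semidefinite and hence $\lambda_{\min}(A)\le 0$.

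The main obstacle, and the point requiring the most care, is the supersolution inequality at boundary points, because there the relaxed operator $F^*$ introduces the $\max$ with the boundary value $-c$, and one must confirm that the PDE part $H(D^2\phi(x))$ is nonnegative whenever $(v_c)_*-\phi$ attains a local minimum there. The crucial structural fact is that at such a boundary minimum, comparing $\phi$ against the interior value $0>(v_c)_*(x)=-c$ of the envelope at nearby interior points does \emph{not} directly constrain $D^2\phi(x)$; instead one must observe that because $(v_c)_*\le 0$ everywhere with equality in the interior, a local minimum of $(v_c)_*-\phi$ at a boundary point still yields enough information, via the one-sided structure of $\Hd$, to conclude $H(D^2\phi(x))\ge 0$. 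I would verify this by exhibiting an explicit test matrix $B\in\m S_1$, for instance $B=e_1e_1^{\tr}$, giving $-B:D^2\phi(x)=-\p_{11}\phi(x)$, and arguing from the geometry of the half-space that this quantity is nonnegative at the minimum; checking this sign condition carefully is where the real work lies.
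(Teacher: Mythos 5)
Your envelope computations, your subsolution argument, and your treatment of interior points for the supersolution property are all correct (and more detailed than the paper, which simply calls these steps clear). The genuine gap is exactly where you admit the real work lies: the supersolution inequality at boundary points, and the route you sketch there would fail. First, the intermediate claim that $H(A)\ge 0$ for every symmetric $A$ when $f=0$ is false: one has $H(A)=\sup_{B\in\m S_1}(-B:A)=-\lambda_{\min}(A)$, which is strictly negative whenever $A$ is positive definite; your partial retraction of this claim only covers interior points. Second, and decisively, the test matrix you propose, $B=e_1\otimes e_1^\top$, points in the \emph{normal} direction of $\Hd$, and the geometry of the half-space gives no control on $\p_{11}\phi$ at a boundary minimum. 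The reason is the jump of $(v_c)_*$ across $\pO$: moving from $x\in\pO$ into the interior, the envelope increases by $c$, and this slack absorbs any second-order behaviour of $\phi$ in the normal direction. Concretely, take $\phi(y)=y_1^2$ and $x=0$: then $(v_c)_*-\phi$ has a local minimum at $x$ (in the interior $0-y_1^2\ge -c$ for $\abs{y}$ small, and on the boundary $-c-0\ge -c$), yet $-\bigl(e_1\otimes e_1^\top\bigr):D^2\phi(x)=-2<0$. So the sign condition you intend to verify carefully is simply not true for your choice of $B$.

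The missing idea is to test in a \emph{tangential} direction, which is where the hypothesis $\dim\ge 2$ enters and which your proposal never uses. Since $(v_c)_*\equiv -c$ is constant along $\pO$, comparing the minimum of $(v_c)_*-\phi$ at $x\in\pO$ only with boundary points $x\pm\eps y$, where $y=(0,y_2,\dots,y_\dim)$ is a unit tangent vector, yields $\phi(x\pm\eps y)\le\phi(x)$; hence the second difference quotient
\begin{equation*}
\frac{\phi(x+\eps y)-2\phi(x)+\phi(x-\eps y)}{\eps^2}\le 0,
\end{equation*}
and in the limit $(y\otimes y^\top):D^2\phi(x)\le 0$. Then $B_y=y\otimes y^\top\in\m S_1$ gives $H(D^2\phi(x))\ge -B_y:D^2\phi(x)\ge 0$, so $F^*(D^2\phi(x),(v_c)_*(x),x)=\max\{H(D^2\phi(x)),-c\}\ge 0$. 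This tangential argument is the paper's proof; your normal-direction variant cannot be repaired, because the example above shows the inequality it aims for is false. Note also that this is consistent with the failure of the proposition in one dimension, where no tangential direction exists.
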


\begin{proof}
It follows from the definition of $v_c$ that $(v_c)^* \equiv 0$ identically in $\oO$, whereas $(v_c)_* = v$ in $\oO$ since $v_c$ is lower semi-continuous. It is thus clear that $v_c$ is a viscosity subsolution of the problem.

We now prove that the function $v_c$ is also a viscosity supersolution and hence a viscosity solution of the problem in the sense of~\cite{BS}; in particular, we must show that $v_c$ is a viscosity supersolution, i.e.~that \eqref{supersol} holds for all $\phi \in C^2(\oO)$ such that $(v_c)_* - \phi$ has a local minimum at $x\in \oO$. It is clear that~\eqref{supersol} is satisfied whenever $x\in \Om$ is an interior point, since $v_* \equiv 0$ in $\Om$. Hence we need only to consider boundary points $x\in\pO$. Suppose now that $\phi \in C^2(\oO)$ is such that $(v_c)_* - \phi$ has a local minimum at $x\in \pO$.  Then, since $\dim\geq 2$, we may take a unit tangent vector $y = (0,y_1,\dots,y_{d-1})$ to the boundary, with $\abs{y}=1$, noting that for any $\eps\in \R$, $x\pm\eps y \in \pO$. Then, we deduce that, for $\eps>0$ sufficiently small,
\begin{equation}\label{divided_differences}
\frac{\phi(x+\eps y)-2\phi(x)+\phi(x-\eps y)}{\eps^2} \leq 0,
\end{equation}
where we have used the fact that $ (v_c)_*(x\pm\eps y) - \phi(x\pm\eps y) \geq (v_c)_*(x)-\phi(x)$ whenever $\eps$ is small enough, and that that $(v_c)_*(x\pm\eps y) = (v_c)_*(x)$ since $(v_c)_* \equiv -c$ on $\pO$. Therefore, taking the limit $\eps\tends 0$, we deduce from \eqref{divided_differences} that the second-order directional derivative $(y \otimes y^{\top}):D^2 \phi(x) \leq 0$. Note that the matrix $B_y \coloneqq y \otimes y^\top$ belongs to the set $\m S_1$ appearing in \eqref{HJB}, since $ B_y$ is positive semi-definite and has trace equal to $\abs{y}^2 =1$ (recall that $y$ was chosen as a unit vector). Therefore, using the definition of $H(D^2 \phi(x))$ from~\eqref{HJB}, we see that $H(D^2\phi(x)) \geq - B_y : D^2 \phi(x) \geq 0 $, and hence 
\[
F^*(D^2 \phi(x),(v_c)_*(x),x)=\max\{H(D^2\phi(x)),(v_c)_*(x)\}\geq 0,
\]
as required by~\eqref{supersol}. Hence $v_c$ is also a viscosity supersolution and thus a viscosity solution of \eqref{HJB}.
\end{proof}

Proposition~\ref{example1} shows that in general, there may be infinitely many viscosity solutions for \eqref{MA} and \eqref{HJB} with Dirichlet boundary conditions understood in the sense of \cite{BS}. {\em Therefore, by the equivalence of \eqref{MA} and \eqref{HJB}, in general there cannot be a comparison principle between sub- and supersolutions for the Monge--Amp\`ere equation when the Dirichlet boundary conditions are understood in the sense of Barles--Souganidis, even on smooth convex domains!} 

\begin{remark}
In Proposition~\ref{example1}, we considered negative perturbations on the boundary, i.e. $v_c(x)=-c$, with $c>0$. For the case of positive perturbations, i.e. $v_c=c$, it is possible to construct test functions showing that the subsolution property does not hold.
\end{remark}

\section{Dirichlet boundary conditions as in the User's Guide}

The definition of viscosity solution is formulated in a different way in the User's Guide~\cite{UG}. There the gradient and Hessians obtained from the test functions define the jets 
\begin{align*}
J^{2,+} u(x) & :=\left\{ (D \phi(x), D^2 \phi(x)) : \phi \in C^2 \text{ and } u - \phi \text{ has local maximum at } x \right\},\\
J^{2,-} u(x) & :=\left\{ (D \phi(x), D^2 \phi(x)) : \phi \in C^2 \text{ and } u - \phi \text{ has local minimum at } x \right\}.
\end{align*}
These jets may no be rich enough to replace the notion of the classical gradient and Hessian in the proof of a comparison principle in \cite{UG}, which is why one considers the closures
\begin{align*}
\overline{J}^{2,+}_\oO u(x) := \bigl\{ & (p,X) \in \R^d \times \m S : \exists \, (x_n, p_n, X_n) \in \oO \times \R \times \m S \text{ so that } \\
& (p_n, X_n) \in J^{2,+} u(x_n) \text{ and } (x_n, u(x_n), p_n, X_n) \to (x, u(x), p, X) \bigr\},\\
\overline{J}^{2,-}_\oO u(x) := \bigl\{ & (p,X) \in \R^d \times \m S : \exists \, (x_n, p_n, X_n) \in \oO \times \R \times \m S \text{ so that } \\
& (p_n, X_n) \in J^{2,-} u(x_n) \text{ and } (x_n, u(x_n), p_n, X_n) \to (x, u(x), p, X) \bigr\},
\end{align*}
which `inherit' nearby gradients and Hessians. 

In line with Example 1.11, Definition 7.4 and equation (7.24) of~\cite{UG}, we keep the above definitions of $F$, $F_*$ and $F^*$. We say that a function $v$ is a viscosity subsolution of the boundary value problem if $u$ is upper semi-continuous on $\oO$ and 
\[
F_*(A, v(x), x) \le 0 \qquad \forall \; (A, p) \in \overline{J}_{\overline{\Omega}}^{2,+}v(x).
\]
Similarly $v$ is a viscosity supersolution whenever $v$ is lower semi-continuous on $\oO$ and
\[
F^*(A, v(x), x) \ge 0 \qquad \forall \; (A, p) \in \overline{J}_{\overline{\Omega}}^{2,-}v(x).
\]
Consequently, there are two differences with the Barles--Souganidis definition:
\begin{enumerate}[(a)]
\item The equation is tested with a larger set of `derivatives' as a result of the closure of the semi-jets.
\item Both $u$ and $v$ are assumed to be semi-continuous, rather than taking their lower and upper semi-continuous envelopes.
\end{enumerate}
The functions $v_c$ from Proposition~\ref{example1}, which are lower semi-continuous by definition, are not affected by the closure of the jets (a) in the sense that the above arguments from the previous section related to the supersolution property of $v_c$ remain valid without change. 

However, the requirement of semi-continuity (b) means that now, the functions $v_c$ do not qualify as subsolutions, (and thus are not viscosity solutions) in the sense of \cite{UG}. Nevertheless, since $u\equiv 0$ is a viscosity solution and hence is also a subsolution and yet $u\geq v_c$ for all $c>0$, we have found a subsolution that does not lie below the supersolution $v_c$; thus there is again no comparison principle between semi-continuous sub- and supersolutions. We note that there is no contradiction between our example and \cite[Theorem~7.9]{UG}, which asserts only a comparison principle between \emph{continuous} sub- and supersolutions. However, recall that the case of semi-continuous sub- and supersolutions is the relevant one for the study of numerical approximations.

\section{Dirichlet boundary conditions in the classical sense}

\begin{figure}[t]
\includegraphics[width=12cm]{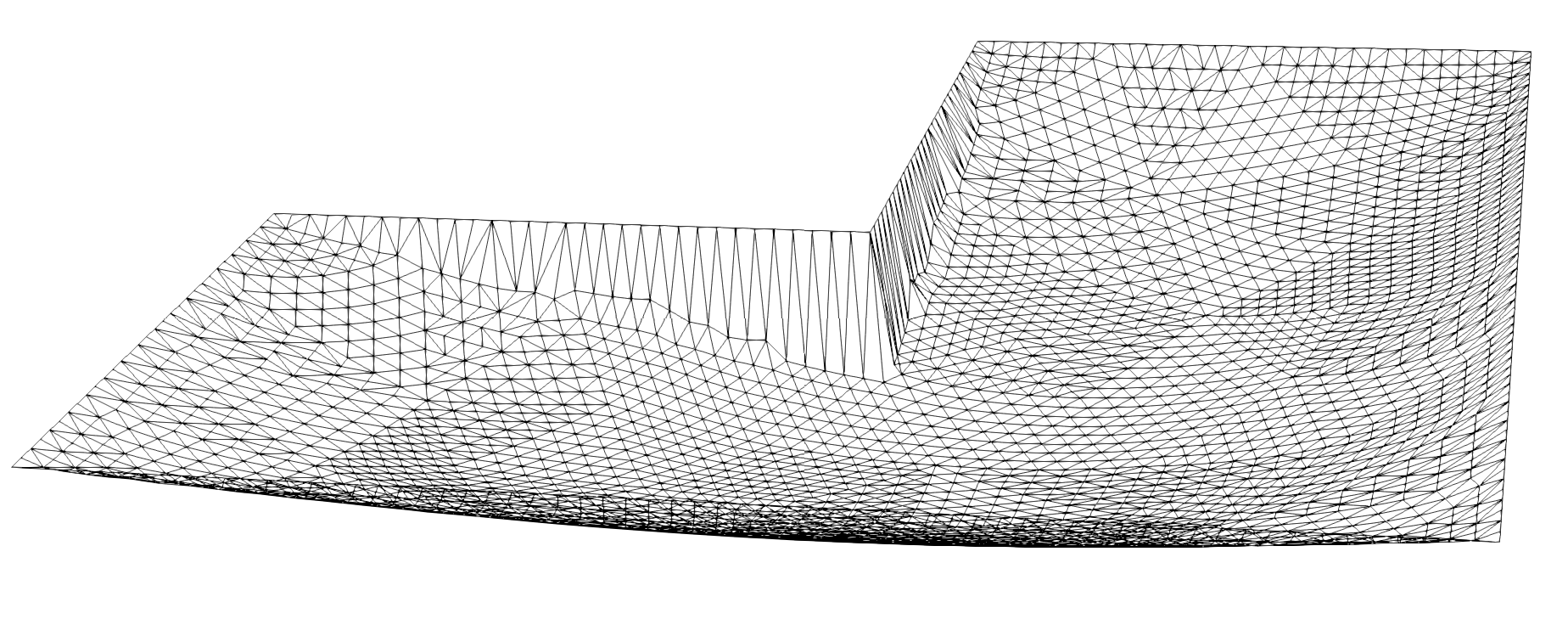}
\caption{Numerical solution of \eqref{HJB} on an L-shaped domain with homogeneous boundary conditions and $f \equiv 1$.}
\label{wireframe}
\end{figure}

As in \cite[Definition 2.2]{UG} we now say that a function $u$ is called a viscosity subsolution (resp.~supersolution) if $u \in \USC(\Omega)$ (resp.~$u\in \LSC(\Omega)$) and if for all $\varphi \in C^2(\Omega)$ such that $u-\varphi$ has a local maximum (resp.~minimum) at $x\in \Omega$ we have
\[
F(D^2\varphi(x),\nabla \varphi(x), u(x), x) \leq 0 
\]
(resp.~$F(D^2\varphi(x),\nabla \varphi(x), u(x), x) \geq 0$). 

\begin{figure}[t]
\centerline{\includegraphics[width=12cm]{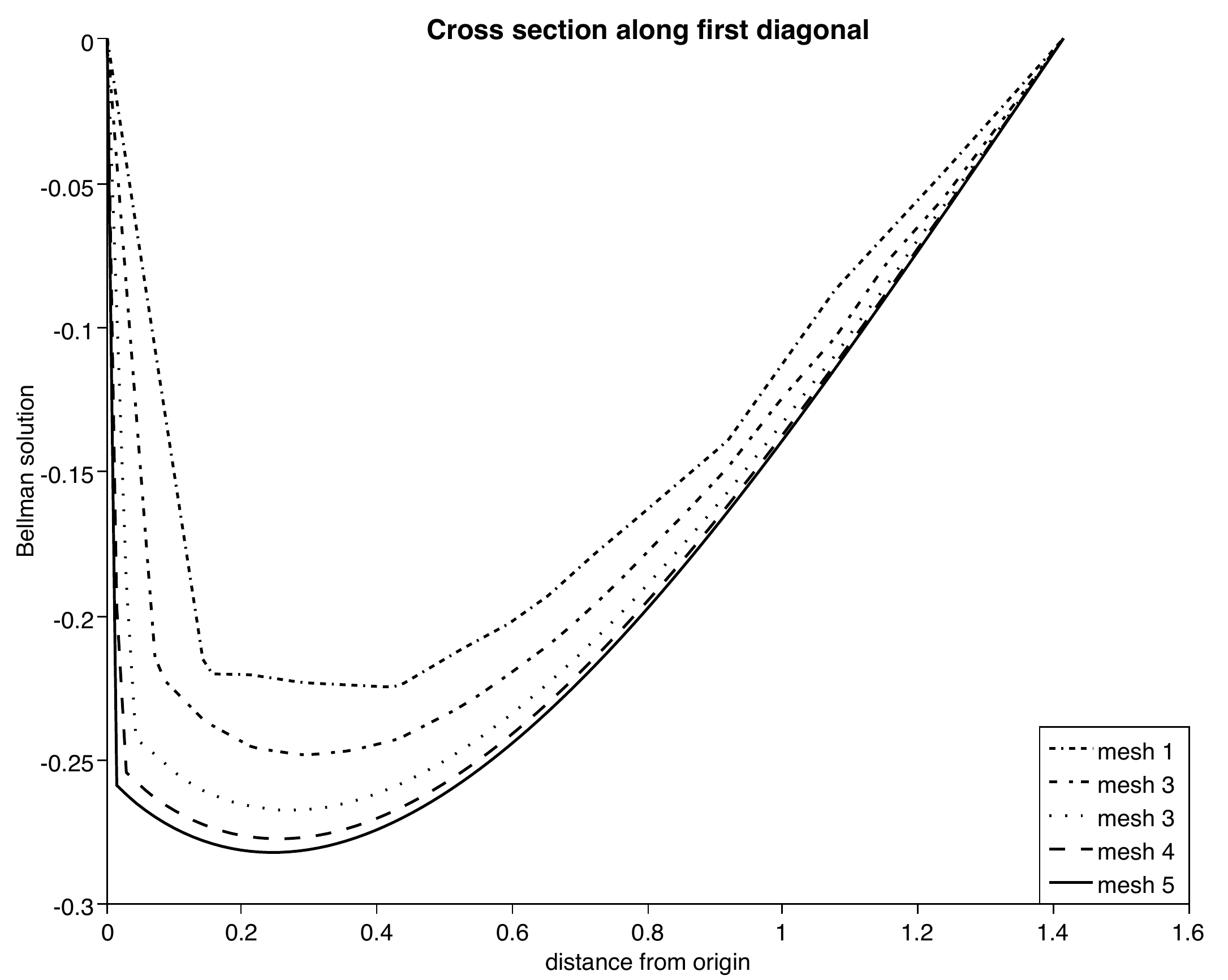}}
\caption{Cross sections of the numerical solution along the first diagonal $x_1 = x_2$.}
\label{cross_section}
\end{figure}

Lemma~8 in \cite{FJ16}, in the spirit of \cite[Section 5.C]{UG}, states that if $u$ is a subsolution and $v$ is  a supersolution of \eqref{HJB} \emph{and crucially if $u \le v$ on $\partial \Omega$}, then $u \le v$ on~$\oO$. Hence a viscosity solution that satisfies the boundary conditions in a pointwise sense is necessarily unique, if it exists.
The general setting of paper \cite{FJ16} is that of a bounded strictly convex domain $\Omega$; however, neither boundedness nor convexity are used in the proof of Lemma~8 of \cite{FJ16}.
The existence and uniqueness of viscosity solutions holds with classical boundary conditions on strictly convex domains. Yet, on non-convex domains the Monge--Amp\`ere problem is in general not well-posed.
Since the Barles--Souganidis theorem on the convergence of numerical approximations is also a proof of the existence of a unique viscosity solution, theorems of this type are therefore bound to fail for \eqref{HJB} on general non-convex domains. It is interesting to pinpoint the step at which the argument breaks down. Lemma 6.4 of \cite{FJ16} shows how the upper and lower semi-continuous envelopes of the numerical solutions in the small-mesh limit satisfy the classical boundary conditions; this argument relies on the existence of certain test functions, for which the strict convexity of the domain is needed.

We shall therefore consider the scheme of \cite{FJ16} for \eqref{HJB} on the L-shape domain
\[
\Omega = \bigl[ (0,1) \times (-1,1) \bigr] \cup \bigl[ (-1,1) \times (0,1) \bigr],
\]
noting that the existence and uniqueness of numerical solutions also holds on non-convex domains. A numerical solution is depicted in Figure \ref{wireframe} while Figure \ref{cross_section} shows the cross sections on $\{ (x_1, x_2) \in \Omega : x_1 = x_2 \}$ of the numerical solutions over several levels of refinement, where mesh 1 is the coarsest with 328 degrees of freedom while mesh~5 has 83968 DoFs. The figures illustrate how a mesh-dependent boundary layer appears in the vicinity of the re-entrant corner. Thus it is reasonable to expect that the lower semi-continuous envelope 
\[
\underline{u}(x) := \liminf_{\substack{y\to x\\ h \to 0}} u_h(y), \quad \forall\,x\in \oO,
\]
of the sequence $(u_h)_h$ of numerical solutions will not satisfy the boundary conditions in the classical sense, so that the above mentioned comparison principle may not be used to guarantee existence of the viscosity solution.

\section{Mixed Dirichlet--Neumann boundary conditions as in the Barles--Souganidis theorem }

We now show some generalisations of the example of section~\ref{sec:BS} to problems with mixed boundary conditions on bounded convex domains in order to highlight some further subtleties and challenges of treating the boundary conditions in a generalised sense. We therefore return to the definition of viscosity sub- and supersolutions of~\cite{BS}, as detailed in section~\ref{sec:BS}. 

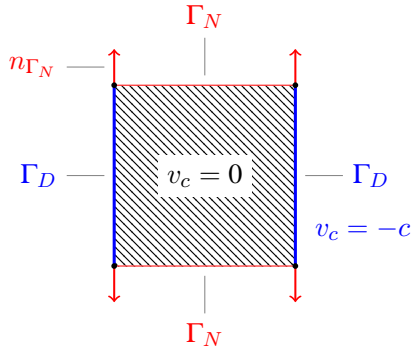
\begin{figure}[t]
\begin{center}
\begin{tikzpicture}[scale=2.5,st1/.style={circle,draw=black,fill=black,thick, minimum size=0.5mm, inner sep=0pt}]
\draw[white,pattern=north west lines,pattern color=black] (0,0) rectangle   (1,1);
\draw[red] (0,0) rectangle  (1,1);
\draw[blue,very thick] (0,0) -- (0,1);
\draw[blue,very thick] (1,0) -- (1,1);
\node[fill=white] at (0.5,0.5) {$v_c=0$};
\draw[red,->,thick] (0,0) node[st1] {} -- (0,-0.2);
\draw[red,->,thick] (1,0) node[st1] {} -- (1,-0.2);
\draw[red,->,thick] (0,1) node[st1] {} -- (0,1.2);
\draw[red,->,thick] (1,1) node[st1] {} -- (1,1.2);
\node[pin=left:{\color{red}$n_{\Gamma_N}$}] at (0,1.1) {};
\node[label=right:{{\color{blue}$v_c=-c$}}] at (1,0.2) {};
\node[pin=left:{{\color{blue}$\Gamma_D$}}] at (0,0.5) {};
\node[pin=right:{{\color{blue}$\Gamma_D$}}] at (1,0.5) {};
\node[pin=below:{{\color{red}$\Gamma_N$}}] at (0.5,0) {};
\node[pin=above:{{\color{red}$\Gamma_N$}}] at (0.5,1) {};
\end{tikzpicture}
\end{center}	
\caption{Construction of the viscosity solutions $v_c$ in~Proposition~\ref{example2}.} 
\end{figure}

Consider the unit square domain $\Om = (0,1)^2$ in two space dimensions, and consider the simple Monge--Amp\`ere equation \eqref{HJB} with mixed Dirichlet--Neumann boundary conditions
\begin{equation}\label{mixed}
\begin{aligned}
H(D^2 u) = 0  & 	& &\text{in }\Om, \\
u = 0 & & & \text{on } \Gamma_{D},\\
\nabla u\cdot n =0 & && \text{on } \Gamma_{N},
\end{aligned}
\end{equation}
where $H(\cdot)$ is as in \eqref{HJB}, where $\Gamma_D = \{x=(x_1,x_2)\in \pO,\; x_1\in\{0,1\}, x_2\in(0,1)\}$ is composed of the left and right faces of $\pO$ (which are open relative to $\pO$), and $\Gamma_N = \{x=(x_1,x_2)\in \pO,\; x_1\in (0,1), x_2\in \{0,1\}\}$ is composed of the top and bottom open faces of $\pO$. Furthermore we introduce $\overline{\Gamma_D}$ the closure  of $\Gamma_D$, and we note that $\overline{\Gamma_D}$ and $\Gamma_N$ partition $\pO$.
To formalize the definition of the viscosity sub- and super-solutions, we define the operator $B\colon \R^{\dim}\times \R \times \pO \tends \R $ by
\begin{equation*}
B(p, r, x) \coloneqq
\begin{cases}
r = 0  & \text{if } x\in \overline{\Gamma_{D}},\\
p\cdot n_{\Gamma_N} =0 & \text{if } x\in \Gamma_{N},
\end{cases}
\end{equation*}
where $n_{\Gamma_N}$ is the unit outward normal on $\Gamma_N$, which in this example is simply given by $n_{\Gamma_N}=(0,1)$ when $x_2=1$, and $n_{\Gamma_N}(0,-1)$ when $x_2=0$.
The lower and upper envelopes of $B$ are given by
\[
B_*(p,r,x) \coloneqq
\begin{cases}
	B(p,r,x) & : x\in \Gamma_D\cup\Gamma_N,\\
	\min\{r, p\cdot n_{\Gamma_N} \} & : x\in \pO \setminus (\Gamma_D\cup \Gamma_N)
\end{cases}
\]
and
\[
B^*(p,r,x) \coloneqq
\begin{cases}
	B(p,r,x) & : x\in \Gamma_D\cup\Gamma_N,\\
	\max\{r, p\cdot n_{\Gamma_N} \} & : x\in \pO \setminus (\Gamma_D\cup \Gamma_N).
\end{cases}
\]
Following \cite{BS} and \cite[Section~7.B]{UG}, a locally bounded function $v$ is called a viscosity subsolution of the boundary value problem \eqref{mixed} if
\begin{equation*}
F_*(D^2 \phi(x), \nabla \phi(x), v^*(x), x) \leq 0
\end{equation*}
for all $\phi \in C^2(\oO)$ such that $v^* - \phi$ has a local maximum at $x\in \oO$, where $F_*$ is defined by
\[
F_*(A, p, w, x) = 
\begin{cases}
H(A) \quad & : x \in \Omega,\\\min \{ H(A), B_*(p,w,x) \} & : x \in \pO.
\end{cases}
\]
Analogously, $v$ is a viscosity supersolution of \eqref{mixed} whenever 
\begin{equation}\label{mixed_supersol}
F^*(D^2 \phi(x), v_*(x), x) \ge 0
\end{equation}
for all $\phi \in C^2(\oO)$ such that $v_* - \phi$ has a local minimum at $x\in \oO$, where $F^*$ is given by
\[
F^*(A, w, x) = 
\begin{cases}
H(A)  \quad & : x \in \Omega,\\
\max \{ H (A), B^*(p,w,x) \} & : x \in \pO.
\end{cases}
\]

It is clear that the function $u\equiv 0$ is a viscosity solution of the boundary value problem \eqref{mixed}. However, we show in Proposition~\ref{example2} below that again uniqueness of the viscosity solution fails due to the lack of a comparison principle.

\begin{proposition}\label{example2}
For a fixed but arbitrary constant $c>0$, let the locally bounded function $v_c$ be defined by $v_c=0$ on $\Om \cup \Gamma_N$ and $v_c=-c$ on $\overline{\Gamma_D}$. Then $v_c$ is a viscosity solution of \eqref{mixed}.	
\end{proposition}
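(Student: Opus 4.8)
The plan is to mirror the proof of Proposition~\ref{example1}, first identifying the semicontinuous envelopes of $v_c$ and then verifying the sub- and supersolution inequalities separately in $\Om$, on the open faces $\Gamma_D$ and $\Gamma_N$, and at the four corners in $\overline{\Gamma_D}\setminus\Gamma_D$. Since $v_c=0$ on the dense open set $\Om$ and $v_c\le 0$ throughout, approaching any point of $\oO$ from the interior shows $(v_c)^*\equiv 0$ on $\oO$; a direct check of the defining inequality shows that $v_c$ is already lower semi-continuous, so $(v_c)_*=v_c$, the values $-c$ along the Dirichlet faces giving $\liminf=-c$ on $\overline{\Gamma_D}$. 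Because $(v_c)^*\equiv 0$, the subsolution property collapses to that of $u\equiv 0$: throughout we are in the degenerate case $f=0$, so $H(A)=-\lambda_{\min}(A)=\sup_{B\in\m S_1}(-B:A)$. In $\Om$ a local maximum of $-\phi$ forces $D^2\phi(x)\ge 0$ and hence $H(D^2\phi(x))\le 0$; on $\Gamma_D$ the term $B_*$ equals $(v_c)^*(x)=0$, so $F_*=\min\{H,0\}\le 0$; on $\Gamma_N$, testing $\phi$ at its local minimum in the inward direction gives $\nabla\phi(x)\cdot n_{\Gamma_N}\le 0$; and at the corners $B_*=\min\{0,\nabla\phi(x)\cdot n_{\Gamma_N}\}\le 0$. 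In each case $F_*\le 0$, as required.

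The substance lies in the supersolution inequality \eqref{mixed_supersol}, where $(v_c)_*=v_c$ and $v_c-\phi$ has a local minimum at $x$. The interior case is immediate, since $D^2\phi(x)\le 0$ gives $H(D^2\phi(x))\ge 0$. On an open Dirichlet face the value $-c$ is constant, so along the unit tangent $e_2=(0,1)$ I would repeat the divided-difference argument of \eqref{divided_differences} to obtain $(e_2\otimes e_2^\top):D^2\phi(x)\le 0$; since $e_2\otimes e_2^\top\in\m S_1$, this yields $H(D^2\phi(x))\ge -(e_2\otimes e_2^\top):D^2\phi(x)\ge 0$ and hence $F^*=\max\{H,-c\}\ge 0$. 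On an open Neumann face the key point is that $v_c\equiv 0$ in a full neighbourhood of $x$ within $\oO$, so $\phi$ has a local maximum at $x$; testing in the inward normal direction gives $\nabla\phi(x)\cdot n_{\Gamma_N}\ge 0$, whence $F^*\ge\nabla\phi(x)\cdot n_{\Gamma_N}\ge 0$.

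The main obstacle is the four corners, where $B^*=\max\{-c,\nabla\phi(x)\cdot n_{\Gamma_N}\}$ and only one-sided information is available along each incident edge. Here I would exploit the geometry of the square: each corner meets one vertical Dirichlet edge and one horizontal Neumann edge, and the inward edge direction along $\Gamma_D$ at the corner is exactly $-n_{\Gamma_N}$, where $n_{\Gamma_N}$ is the normal of the adjacent Neumann face. Moving from the corner into $\oO$ along this Dirichlet edge, on which $v_c\equiv -c$ is constant, the local minimum of $v_c-\phi$ yields the one-sided first-order inequality $\nabla\phi(x)\cdot(-n_{\Gamma_N})\le 0$, that is $\nabla\phi(x)\cdot n_{\Gamma_N}\ge 0$, so that $F^*\ge\nabla\phi(x)\cdot n_{\Gamma_N}\ge 0$. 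Collecting the interior, face, and corner cases gives the supersolution inequality everywhere, and combined with the subsolution property already established this shows that $v_c$ is a viscosity solution of \eqref{mixed}. The crux of the argument is thus the observation that at the corners the Neumann term, rather than the equation itself, rescues the supersolution inequality, using only the first-order data transported along the Dirichlet edge.
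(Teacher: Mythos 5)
Your proof is correct and follows essentially the same route as the paper's: subsolution via $(v_c)^*\equiv 0$, supersolution by the tangential second-difference argument of Proposition~\ref{example1} on the open Dirichlet faces, and at the corners the one-sided difference quotient along the Dirichlet edge in the direction $-n_{\Gamma_N}$ yielding $\nabla\phi(x)\cdot n_{\Gamma_N}\ge 0$, so that the Neumann term in $B^*$ saves the inequality. The only difference is that you spell out the cases the paper dismisses as clear (the interior, the Neumann faces, and the full subsolution case analysis), which is fine and adds no gap.
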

\begin{proof}
The upper envelope $(v_c)^* \equiv 0$ in $\oO$, so we see that $v_c$ is a subsolution. To show the supersolution property, consider a function $\phi \in C^2(\oO)$ such that $(v_c)_* - \phi$ has a local minimum at $x\in \oO$. First, it is clear that \eqref{mixed_supersol} holds for whenever $x\in \Om$ is an interior point or when $x \in \Gamma_N$ is a `Neumann' boundary point. It remains only to consider `Dirichlet' points $x\in \Gamma_D$ and corner points $x\in \pO \setminus (\Gamma_N\cup \Gamma_D)$.

If $x\in \Gamma_D$ is a `Dirichlet' point, i.e. $x=(x_1,x_2)$ with $x_1\in \{0,1\}$ and $x_2\in (0,1)$, then we can follow the same argument used in the proof of Proposition~\ref{example1} to deduce that~$\p_{x_2 x_2}^2 \phi(x) \leq 0$ and hence that $H(D^2 \phi(x))\geq 0$. This implies that \eqref{mixed_supersol} holds whenever $x\in \Gamma_D$.  

The only remaining case is when $x$ is a corner point, i.e.\ $x=\pO\setminus (\Gamma_N\cup \Gamma_D)$.
For this case, we note that for $\eps>0$ sufficiently small, $x-\eps n_{\Gamma_N} \in \Gamma_D$ since $n_{\Gamma_N}=\pm (0, 1)$ is the outward normal for the `Neumann' part of the boundary. Therefore, we deduce that, for all $\eps>0$ sufficiently small,
\begin{equation}\label{divided_differences_2}
\frac{\phi(x)-\phi(x-\eps n_{\Gamma_N})}{\eps} \geq 0,
\end{equation}
where we have used the facts that $(v_c)_*(x-\eps n_{\Gamma_N})- \phi(x-\eps n_{\Gamma_N}) \geq (v_c)_*(x) - \phi(x)$ for $\eps>0$ sufficiently small and that that $(v_c)_*(x-\eps n_{\Gamma_N}) = v_c(x) = -c$.
Therefore, taking the limit $\eps\tends 0$ in \eqref{divided_differences_2} gives $\nabla \phi(x) \cdot n_{\Gamma_N} \geq 0$, and hence $B^*(\nabla \phi(x),(v_c)_*(x),x) = \max\{ \nabla \phi(x) \cdot n_{\Gamma_N}, (v_c)_*(x)\} \geq 0$. Thus we find that \eqref{mixed_supersol} is satisfied in the case where $x$ is a corner point. Hence $v_c$ is also a viscosity supersolution and thus a viscosity solution of \eqref{mixed}.
\end{proof}

The conclusion from Proposition~\ref{example2} is that uniqueness again fails when considering the boundary conditions in the generalised sense as in \cite{BS}.

\end{document}